\documentclass[12pt, a4paper]{article}
	
	\usepackage[utf8]{inputenc}          
	\usepackage{amsmath, amssymb, amsfonts} 
	\usepackage{graphicx}                
	\usepackage{geometry}                
	\usepackage{anyfontsize} %
	\usepackage{float}
	\usepackage{authblk}
	\usepackage{colortbl}   
	
	\usepackage{xcolor}     
	
	\usepackage{natbib}
	
	\usepackage{steinmetz}

	\newcommand{\peme}{p_{\text{em}}}

\usepackage{algorithm}

\usepackage{amsthm} 
\newtheorem{theorem}{Theorem}
\newtheorem{lemma}{Lemma} 

\newtheorem{proposition}{Proposition}
\newtheorem{remark}{Remark}

\begin{document}

		\title{Local Search for Almost-Spanning Square Grids in
			Erd\H{o}s--Rényi Random Graphs}
		\author[1]{Dávid Ferenczi\thanks{Corresponding author:
				\texttt{david.ferenczi@maastrichtuniversity.nl}}}
		\author[1]{Alexander Grigoriev}
		
		\affil[1]{Department of Data Analytics and Digitalization,
			School of Business and Economics,
			Maastricht University,
			Maastricht, The Netherlands}
		
		\date{}
		\maketitle

				\maketitle
		
	\begin{abstract}Finding large lattice subgraphs in sparse Erd\H{o}s--Rényi random graphs is a classical problem at the interface of random graph theory and algorithms. General bounded-degree embedding and universality theorems give powerful results for broad graph families, but when specialized to square grids they operate at densities substantially larger than the grid-emergence scale. In this paper we exploit the specific geometry of the square grid. We introduce the Quarantined Local Search, a three-phase local algorithm that separates the construction of an initial boundary from the later corner-closure process and controls adaptive negative exposure through bounded pair-test histories. We prove that, for every fixed $\delta\in (0,1)$, there exists $C_\delta>0$ such that the algorithm embeds a $k\times k$ square grid with $k^2\le (1-\delta)n$ in $G(n,p)$ with high probability whenever $p\ge C_\delta\sqrt{\ln k/n}$. Thus, for $k^2=\Theta(n)$, a density of order $\sqrt{\log n/n}$ is sufficient, a factor of order $\sqrt{\log n}$ above the corresponding $n^{-1/2}$ emergence scale.\end{abstract}


		%
		


		\section{Introduction}\label{sec:int}
		
		Given a graph $H$ and an Erd\H{o}s--Rényi random graph $G\sim G(n,p)$, a classical question in random graph theory asks for the threshold probability $\peme$ such that, if $p>\peme$, then $G$ contains a copy of $H$ as a subgraph with high probability. Throughout the paper, subgraphs are not necessarily induced, i.e., extra edges of the host graph among subgraph vertices may be present. For a graph $G$, write $\rho(G)=e(G)/v(G)$. We call $H$ balanced if $\rho(H')\le\rho(H)$ for every nonempty subgraph $H'\subseteq H$.

		For fixed balanced graphs, the classical threshold result \cite{bollobas_1981} gives that the threshold for the appearance of $H$ has order
		\begin{equation*}
			n^{-v(H)/e(H)}.
		\end{equation*}

While the fixed-graph threshold is classical, \citet{alon_1992} posed the corresponding question for large spanning graphs, where the target subgraph $H$ contains every vertex of the host graph, i.e. $V(H)=V(G)$. They formulated the problem in a general way and proved bounds for the $d$-cube and the square grid. These bounds were later sharpened by \citet{riordan_2000}, who proved general emergence results for spanning structures in random graphs. In contrast with strictly spanning embeddings, we consider the almost-spanning setting in which the target is allowed to contain up to
a fixed fraction $1-\delta$ of the host vertices.

The closest grid-specific constructive result is due to
	\citet{fernandez_manoussakis}. They gave a greedy stepwise construction
	showing that, for fixed $\varepsilon\in(0,1)$ and
	\begin{equation*}
		p=
		\left(
		\frac{(1+\varepsilon)\log n}{n}
		\right)^{1/2},
	\end{equation*}
	$G(n,p)$ contains with high probability a square grid on at least
	$\varepsilon n/2$ vertices. Thus their construction already operates
	at the same $\sqrt{\log n/n}$ density order as the present algorithm
	in the macroscopic regime. The distinction in the present result is
	the occupancy range: for every fixed $\delta>0$, our construction
	allows grids with up to $(1-\delta)n$ vertices.

A different line of work studies embedding and universality for broad families of bounded-degree graphs. \citet{ferber_2017} proved an almost-spanning embedding theorem for arbitrary graphs on $(1-\varepsilon)n$ vertices with maximum degree at most $\Delta$, for $\Delta\ge 5$, at densities
\begin{equation*}
p\gg \left(n^{-1}\log^{1/\Delta}n\right)^{2/(\Delta+1)}.
\end{equation*}
Subsequently, \citet{ferber_nenadov_2018} proved a spanning universality theorem for all $n$-vertex graphs of maximum degree at most $\Delta\ge 3$, at densities
\begin{equation*}
p\ge \left(n^{-1}\log^3 n\right)^{1/(\Delta-1/2)}.
\end{equation*}
These results are substantially more general than the present paper. However, when specialized to square grids, they apply at densities much larger than the grid-emergence scale. Our focus is therefore different: we restrict to one geometric family and exploit its nested lattice structure to obtain a local construction when $p\geq C_\delta \sqrt{\frac{\log k}{n}}$.

The present paper sits between these two perspectives. Compared with the existential square-grid results of \citet{riordan_2000}, our result is algorithmic but, in the regime $k^2=\Theta(n)$, pays a factor of order $\sqrt{\log n}$ in the required edge density. Compared with the general bounded-degree embedding and universality results of \citet{ferber_2017,ferber_nenadov_2018}, our result is much narrower, applying only to square grids, but reaches a substantially sparser density for this specific target family. The main tool is a three-phase algorithm, the \emph{Quarantined Local Search}, which separates the construction of the initial boundary from the later corner-closure process. The quarantine mechanism prevents one-edge exposure history 	from contaminating the active frontier, while a bounded partner-history invariant controls the remaining pair-test dependencies during the grid-filling phase.	The comparison with the most relevant previous results is summarized in Table~\ref{tab:comparison}.

\begin{table}[htbp]
	\centering
	\footnotesize
	\caption{Comparison with representative previous embedding results
		relevant to square grids.}
	\label{tab:comparison}
	\begin{tabular}{
			p{0.18\linewidth}
			p{0.18\linewidth}
			p{0.27\linewidth}
			p{0.18\linewidth}
			p{0.13\linewidth}}
		\hline
		Result
		& Target
		& Density guarantee
		& Type of result
		& Occupancy
		\\
		\hline
		
		\citet{fernandez_manoussakis}
		& Square grids
		& $\displaystyle
		p=
		\left(
		\frac{(1+\varepsilon)\log n}{n}
		\right)^{1/2}$
		& Greedy constructive embedding
		& At least $\varepsilon n/2$
		\\[1mm]
		
		\citet{riordan_2000}
		& Square grid $L_k$
		& $\displaystyle
		p=\omega(n)n^{-1/2}$;
		threshold scale $n^{-1/2}$
		& Existential; second moment
		& Spanning
		\\[1mm]
		
		\citet{ferber_2017}
		& Bounded-degree graphs,
		$\Delta\ge5$
		& $\displaystyle
		p\gg
		\left(
		n^{-1}\log^{1/\Delta}n
		\right)^{2/(\Delta+1)}$
		& General embedding theorem
		& $(1-\varepsilon)n$
		\\[1mm]
		
		\citet{ferber_nenadov_2018}
		& Bounded-degree graphs,
		$\Delta\ge3$
		& $\displaystyle
		p\ge
		\left(
		n^{-1}\log^3n
		\right)^{1/(\Delta-1/2)}$
		& Spanning universality
		& Spanning
		\\[1mm]
		
		This paper
		& Square grids
		& $\displaystyle
		p\ge
		C_\delta
		\sqrt{\frac{\log k}{n}}$
		& Randomized local search
		& $k^2\le(1-\delta)n$
		\\
		\hline
	\end{tabular}
\end{table}
For comparison with square grids, the result of \citet{ferber_2017} may be applied with $\Delta=5$, giving
\begin{equation*}
p\gg n^{-1/3}(\log n)^{1/15},
\end{equation*}
whereas \citet{ferber_nenadov_2018} may be applied with $\Delta=4$, giving
\begin{equation*}
p\ge n^{-2/7}(\log n)^{6/7}.
\end{equation*}
Thus both general bounded-degree results require substantially larger
edge probabilities than the square-grid-specific bound obtained here.

Table~\ref{tab:comparison} summarizes the position of the present
result. Relative to \citet{fernandez_manoussakis}, we retain the same
asymptotic density order for macroscopic square grids while extending
the constructive guarantee from a fixed linear fraction to
every fixed almost-spanning regime $k^2\le(1-\delta)n$. Relative to
\citet{riordan_2000}, our result is constructive and local, but in the
regime $k^2=\Theta(n)$ pays a factor of order $\sqrt{\log n}$ over the
existential threshold scale. Finally, compared with the general
bounded-degree embedding and universality results of
\citet{ferber_2017,ferber_nenadov_2018}, our result is substantially
narrower but reaches a sparser density for square grids by exploiting
their nested lattice geometry.

		The main result of our paper goes as follows:

		\begin{theorem}\label{thm:main}
			Fix $\delta\in(0,1)$, and let $k=k(n)\to\infty$ satisfy
			\begin{equation*}
				k^2\le(1-\delta)n.
			\end{equation*}
			There exists a constant $C_\delta>0$ and a randomized local-search
			algorithm such that, whenever
			\begin{equation*}
				p\ge C_\delta\sqrt{\frac{\ln k}{n}},
			\end{equation*}
			the algorithm finds a subgraph of $G\sim G(n,p)$ isomorphic to the
			$k\times k$ square grid with probability $1-o(1)$.
		\end{theorem}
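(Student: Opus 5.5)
We plan to build the grid row by row in the "corner-closure" order: vertex $(i,j)$ of the grid is placed only after $(i-1,j)$ and $(i,j-1)$ are placed. It must be an unused vertex of $G$ adjacent to both. The proof has three parts.

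First, we fix the randomness model. We expose edges of $G(n,p)$ lazily: the algorithm only ever queries pairs $\{u,v\}$ with $u$ a candidate and $v$ already embedded. A pair is never queried twice. At every step at least $n-k^2\ge\delta n$ vertices remain unused, so each query is a fresh Bernoulli$(p)$ trial independent of the history.

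The key difficulty is the \emph{negative exposure}. A candidate $w$ that was tested against $(i-1,j)$ and found non-adjacent cannot be reused later with that partner. Worse, a candidate that was adjacent to one required neighbour but not the other has spent a positive edge. We plan to handle this by quarantine, in the spirit of the abstract.
\begin{itemize}
\item \textbf{Phase 1.} Embed the first row and first column greedily as paths. Each step needs a single neighbour among $\ge\delta n$ fresh vertices, which succeeds with probability $1-(1-p)^{\delta n/2}$, overwhelmingly. All vertices touched in this phase are then declared quarantined.
\item \textbf{Phase 2: invariant.} During corner closure we maintain the invariant that every unused, unquarantined vertex has been pair-tested against at most $B=B(\delta)$ embedded vertices. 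Any vertex exceeding this budget is quarantined.
\item \textbf{Phase 2: counting.} There are $k^2$ closure steps, and each step queries a random fresh sample of $m$ candidates against its two partners. So the total number of pair tests is $O(k^2 m)$, and the number of quarantined vertices is at most $O(k^2 m/B)$. Choosing $m$ and $B$ appropriately keeps this at most $\delta n/4$.
\end{itemize}

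Second, we bound the success probability of a single closure step. Among $\ge\delta n/2$ available vertices, each is adjacent to both partners independently with probability $p^2\ge C_\delta^2\ln k/n$. The number of common neighbours that are still fresh is therefore stochastically at least $\mathrm{Bin}(\delta n/2,p^2)$, whose mean is $\ge (\delta C_\delta^2/2)\ln k$. Hence the step fails with probability at most $\exp(-c\,\delta C_\delta^2\ln k)=k^{-c\delta C_\delta^2}$. Taking $C_\delta$ large enough that this exponent exceeds $3$, a union bound over the $k^2$ steps gives total failure probability $O(k^{-1})=o(1)$ because $k\to\infty$.

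The conditioning must be done carefully. Negative information "$w\not\sim v$" only concerns pairs involving already-embedded $v$. The current partners $(i-1,j)$ and $(i,j-1)$ were embedded recently, and by the bounded-history invariant only $O(B)$ earlier tests could involve them and any given $w$. We would formalize this by showing that, conditioned on the history, the indicator that $w$ is a common neighbour dominates a Bernoulli$(p^2)$ for all $w$ outside a set of size $O(B\cdot(\text{tests against these partners}))$. Since each embedded vertex acts as a partner in only at most two closure steps (right and below), this exceptional set is $O(m)=o(n)$.

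The final assembly, Phase 3, consists of the last rows. There the unused pool is smallest, but it is still $\ge\delta n - \delta n/4$ by the slack hypothesis $k^2\le(1-\delta)n$. So the same estimate applies, and the algorithm outputs a $k\times k$ grid w.h.p.

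The main obstacle is the domination argument of the second part. One must show that adaptively chosen candidates, after many rejected tests, are still "fresh" with respect to the new partner pair. This is exactly where the bounded partner-history invariant is needed, and we expect most of the technical work to lie in making that stochastic domination rigorous via a careful exposure martingale.
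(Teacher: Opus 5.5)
\textbf{There is a genuine gap: your parameters cannot be chosen consistently, and the per-candidate budget $B$ is the wrong invariant.}

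By your own estimate, a closure step that examines $N$ fresh candidates fails with probability $(1-p^2)^N\le e^{-Np^2}$. At $p=C_\delta\sqrt{\ln k/n}$ this equals $k^{-NC_\delta^2/n}$. For your union bound over the $(k-1)^2$ closures to give $o(1)$ you need $N\gtrsim 2n/C_\delta^2$, that is, $m=\Theta(n)$.

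The closures then perform $\Theta(k^2m)=\Omega(k^2n/C_\delta^2)$ pair tests. Even stopping each step at its first success still costs about $1/p^2=n/(C_\delta^2\ln k)$ tests per step, so $\Omega(k^2n/(C_\delta^2\ln k))$ in total. Under your rule, a vertex absorbs only $O(B)$ tests before being quarantined, so at most $O(Bn)$ tests can ever be made. For constant $B=B(\delta)$ and $k\to\infty$ both totals are $\gg Bn$. The tests demanded by the first $O(C_\delta^2B)$ closure steps already exhaust this capacity, so the claim that $O(k^2m/B)\le\delta n/4$ for suitable $m,B$ is false.

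Conversely, suppose $m=o(n)$, as in your assertion that the exceptional set is $O(m)=o(n)$. Then $\mathrm{Bin}(\delta n/2,p^2)$ is not the relevant count, because the step inspects only $m$ candidates. The expected number of successes is $mp^2=o(\ln k)$, so the per-step failure probability is $k^{-o(1)}$, which does not survive the union bound.

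Beyond this, the conditional domination statement is only announced (``via a careful exposure martingale''), yet it is the heart of the proof.

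\textbf{The missing idea.} You never need to control a candidate's total test history. You only need its history against closure pairs meeting the current partners $\{u,v\}$, and that is bounded by geometry alone. Each grid vertex lies in at most four closure pairs (Lemma~\ref{lem:partner_bound}); at most two in your row-by-row order, as you observe.

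Lemma~\ref{lem:corner_closure} makes this rigorous in the pair-query model, as follows.
\begin{enumerate}
\item Let $W$ be the at most eight earlier partners of $u$ and $v$ in the candidate's test graph. Split the failure history as $\mathcal H_i=\mathcal H_R\cap\mathcal H_{\mathrm{loc}}$, where $\mathcal H_R$ involves only pairs avoiding $\{u,v\}\cup W$.
\item The witness event $\{X_w=0\text{ for all }w\in W\}$, intersected with $A_i\cap\mathcal H_R$, lies inside $A_i\cap\mathcal H_i$, while $\mathcal H_i\subseteq\mathcal H_R$.
\item Independence of disjoint edge sets then gives $\mathbb P(A_i\mid\mathcal H_i)\ge p^2(1-p)^8$ for \emph{every} remaining candidate, however often it was tested elsewhere.
\end{enumerate}
Thinning to $p\le p_0$ makes $(1-p)^8\ge 1/2$. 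Phase-1 vertices need be quarantined only until the first interior layer is built, since later closure pairs avoid the initial path.

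\textbf{A possible rescue of your approach.} Your exceptional-set idea can also be made to work, but only after discarding $B$. Use edge-level exposure, quarantine the Phase-1 vertices permanently, and let each step test at most $m=\lceil 3\ln k/p^2\rceil\le 3n/C_\delta^2+1$ candidates whose edges to $u$ and $v$ are still unrevealed. Then the exceptional set at step $(i,j)$ is contained in the set of candidates tested at the single earlier step $(i-1,j+1)$, which used $(i-1,j)$ as a partner. This set has size up to $m$, linear in $n$ rather than $o(n)$. Once $C_\delta^2\gg 1/\delta$, at least $m$ fresh candidates remain, each step fails with probability at most $(1-p^2)^m\le k^{-3}$, and the union bound closes.
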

When $k^2=\Theta(n)$, we have $\log k=\Theta(\log n)$. Consequently, Theorem~\ref{thm:main} shows that a density of order $\sqrt{\frac{\log n}{n}}$ is sufficient for the algorithm to succeed with high probability. Thus, in this regime, the algorithm incurs a factor of order $\sqrt{\log n}$ over the existential square-grid emergence scale.
		
		The remainder of the paper goes as follows: in Section \ref{sec:Prelim} we present our main algorithm, that attempts to build a grid of desired size. In Section \ref{sec:Proof} we prove Theorem~\ref{thm:main}. In Section \ref{sec:Discussion} we discuss the limitations and possible extensions of our proposed method.

		\section{Algorithmic Overview}\label{sec:Prelim}

In this section we present the local search algorithm that takes an Erd\H{o}s--Rényi graph $G\sim G(n,p)$, the model parameter $p$, and a target dimension $k$ as input, and attempts to construct a $k\times k$ square grid. We start by describing the two basic vertex embedding methods we rely on.

		\begin{itemize}
			
			\item \textbf{Linear Extension:} We begin by constructing an initial path of $2k-1$ vertices. Starting from an arbitrary root vertex, the algorithm repeatedly searches for a candidate adjacent to the current path tip. If a queried candidate shares an edge with the current tip of the path, it is permanently embedded to extend the sequence. This expansion repeats until the path reaches the required length of $2k-1$, serving as the L-shaped boundary for the upcoming grid expansion.
			
			\item \textbf{Corner Closure:} Once the initial path of $2k-1$ vertices is established, we map it to the outer boundary of our target grid, forming an L-shaped boundary. The algorithm then constructs the remainder of the $k \times k$ grid by iteratively adding successively smaller, nested L-shapes parallel to this initial boundary. The filling begins at the corner of the embedded boundary, say, vertex $v_k$. The algorithm queries the unmapped reservoir for a vertex that connects to both $v_{k-1}$ and $v_{k+1}$, effectively filling the inward-facing pocket to complete the first unit square. This operation creates a new active boundary. By systematically closing the adjacent corners along this frontier, the algorithm constructs a secondary, inner L-shape consisting of $2k-3$ vertices. This process repeats, adding nested L-shapes of decreasing size ($2k-5, 2k-7, \dots$) and sweeping diagonally inward until the final opposite corner is closed and the grid is completely filled.
			
		\end{itemize}

\subsection{Query model}

The algorithm is analyzed in a local-observation model. A Linear Extension query tests whether a candidate vertex is adjacent to the current path tip. These one-edge queries are used only during Phase 1, and any vertices with such exposure history are quarantined before the first Corner Closure layer is constructed.

A Corner Closure query is a pair-query: for a candidate $x$ and an active closure pair $(u,v)$, the query returns SUCCESS if both $xu\in E(G)$ and $xv\in E(G)$, and returns FAILURE otherwise. On FAILURE, the algorithm records only that $x$ was not adjacent to both $u$ and $v$; it does not reveal or retain which of the two edges was absent. Thus the Corner Closure filtrations used below are generated by successful embeddings and by these pair-test outcomes, not by individual missing-edge exposures.

During a Corner Closure, each candidate is tested at most once against the current closure pair. A candidate that fails remains available for subsequent, different closure pairs. Candidates are inspected in a fixed order, or more generally in an order measurable with respect to the revealed filtration.
Although $G$ may be represented explicitly, the algorithm is analyzed as an information-restricted local search procedure; after a failed pair-query it discards the identity of the missing edge and retains only the Boolean failure outcome.
\subsection{Structural Lemmas}
We first record the deterministic fact that the target grid can indeed be completed by the prescribed nested-L sequence of Corner Closures. We then state the probabilistic lemmas used to bound the failure probability of these operations.

\begin{lemma}[Nested-L construction]\label{lem:nested_L}
	Let $S_k$ be the $k\times k$ square grid. After embedding an $L$-shaped boundary path of $2k-1$ vertices, $S_k$ can be completed by a deterministic sequence of Corner Closures. The successive layers add
	\begin{equation*}
	2k-3,\ 2k-5,\ \ldots,\ 1
	\end{equation*}
	vertices, and hence the total number of embedded vertices is
	\begin{equation*}
	(2k-1)+(2k-3)+\cdots+1=k^2.
	\end{equation*}
\end{lemma}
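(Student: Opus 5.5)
We will argue with explicit coordinates. Identify $S_k$ with $\{1,\dots,k\}^2$, two vertices being adjacent when they differ by $1$ in exactly one coordinate. For $m=1,\dots,k$ define the $m$-th L-shape
\begin{equation*}
L_m=\{(i,j)\in\{1,\dots,k\}^2:\ \min(i,j)=m\}.
\end{equation*}
These sets partition $S_k$, and $|L_m|=2(k-m)+1$. Hence $|L_1|=2k-1$, $|L_2|=2k-3$, and so on down to $|L_k|=1$. Summing gives $(2k-1)+(2k-3)+\cdots+1=k^2$; the counting identity follows by induction on $k$, or from the sum of the first $k$ odd numbers.

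Ordered from $(k,1)$ through the corner $(1,1)$ to $(1,k)$, the set $L_1$ is a path in $S_k$. We therefore take $v_1,\dots,v_{2k-1}$ to be the Phase-1 path, with $v_k\mapsto(1,1)$. Its two arms are mapped to the first row and the first column.

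Next we fix the order in which vertices are added. We claim that the layers can be added in the order $L_2,L_3,\dots,L_k$, and that every new vertex $(i,j)$ with $i,j\ge2$ is a Corner Closure. By this we mean that, when $(i,j)$ is added, its neighbours $(i-1,j)$ and $(i,j-1)$ are already embedded and form the active closure pair, while its other two neighbours $(i+1,j)$ and $(i,j+1)$ are not yet embedded.

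Within $L_m$ we start at the corner $(m,m)$. Its pair is $(m-1,m),(m,m-1)\in L_{m-1}$, which for $m=2$ is exactly the pair $v_{k-1},v_{k+1}$ around the corner $v_k$.

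We then move outward along both arms. The vertex $(m,j)$ with $j>m$ is closed using the pair $(m-1,j)\in L_{m-1}$ and $(m,j-1)$, the latter lying in $L_m$ and having been placed just before. Symmetrically, $(i,m)$ with $i>m$ is closed using $(i,m-1)$ and $(i-1,m)$. Since $\min(i-1,j)$ and $\min(i,j-1)$ are each at most $\min(i,j)$, both members of every pair lie in earlier layers, or earlier in the current layer, so the induction on $(m,\text{position in } L_m)$ goes through.

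It remains to check that after all closures every edge of $S_k$ is present among the images. Each edge joins $(i,j)$ to $(i+1,j)$ or to $(i,j+1)$. If both endpoints lie in $L_1$, the edge is a path edge. Otherwise the later-added endpoint, say $(i',j')$ with $i',j'\ge2$, has the other endpoint as one of its two predecessors $(i'-1,j')$ or $(i',j'-1)$, so the edge was certified by the successful pair-test that placed $(i',j')$.

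The only delicate point is this bookkeeping: every grid edge must be certified exactly when its later endpoint is closed, and no closure may ever require a successor vertex. The coordinate description $\min(i,j)=m$ makes both facts immediate, so the lemma is purely combinatorial.
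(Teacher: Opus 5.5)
Your proof is correct and follows essentially the same route as the paper. Both use the same coordinates, the same layers $L_m=\{(i,j):\min(i,j)=m\}$ (the paper's $L_r$), corner-first closure moving outward along both arms with identical closure pairs, and the same sum of odd numbers; your extra checks make explicit what the paper leaves implicit, namely that the other two grid neighbours of each new vertex are not yet embedded and that every grid edge is certified by some successful pair-test.
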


\begin{proof}
	Identify the vertices of $S_k$ with pairs $(i,j)\in [k]\times [k]$, with grid edges between vertices at Manhattan distance one. Embed the initial $L$-shaped boundary as
	\begin{equation*}
	\{(1,j):1\le j\le k\}\cup \{(i,1):2\le i\le k\}.
	\end{equation*}
	For each $r=2,\ldots,k$, define the $r$-th nested layer by
	\begin{equation*}
	L_r=\{(r,j):r\le j\le k\}\cup \{(i,r):r+1\le i\le k\}.
	\end{equation*}
	This layer has
	\begin{equation*}
	(k-r+1)+(k-r)=2(k-r)+1
	\end{equation*}
	vertices.
	
	We embed $L_r$ by first adding $(r,r)$, which is adjacent to the already embedded vertices $(r-1,r)$ and $(r,r-1)$. We then add the remaining vertices of the horizontal part from left to right; each $(r,j)$, $j>r$, is adjacent to $(r-1,j)$ and $(r,j-1)$, both of which have already been embedded. Similarly, we add the vertical part from top to bottom; each $(i,r)$, $i>r$, is adjacent to $(i,r-1)$ and $(i-1,r)$, both already embedded at that moment.
	
	Thus every added vertex is introduced by a Corner Closure using two previously embedded grid neighbours. Summing the layer sizes gives
	\begin{equation*}
	(2k-1)+\sum_{r=2}^k \bigl(2(k-r)+1\bigr)
	=(2k-1)+(2k-3)+\cdots+1=k^2.
	\end{equation*}
\end{proof}

In particular, the prescribed nested-$L$ schedule never uses the same closure pair twice.

\begin{lemma}\label{lem:linear_ext}
	Let $G\sim G(n,p)$, and suppose a Linear Extension step is performed using a candidate pool of size $N$. While building a path of length $2k-1$, the probability $\mathbb P(F_{\textup{linear}})$ that a single Linear Extension step fails to find a suitable candidate is bounded above by
	\begin{equation*}
	\mathbb{P}(F_{\textup{linear}})\le (1-p)^{N-2k+2}.
	\end{equation*}
\end{lemma}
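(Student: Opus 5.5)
The plan is to identify, at a given Linear Extension step, a set of candidates whose adjacency to the current tip has not yet been queried. For each such candidate we then note that the indicator of adjacency to the tip is a fresh, independent Bernoulli$(p)$ variable given the history so far.

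\textbf{Step 1: bound the exposed pairs.} Suppose the path currently has $t\le 2k-2$ vertices $v_1,\dots,v_t$ and tip $v_t$. The candidate pool has $N$ vertices, and we discard every vertex already embedded on the path; there are at most $2k-2$ of these. We also need to control which pairs $\{v_t,x\}$ have already been revealed. The tip $v_t$ was itself chosen as a successful candidate at the previous step. Before $v_t$ became the tip, queries were only of the form $\{v_s,x\}$ with $s<t$, because every query in Phase 1 involves the current tip. The one exception is the single successful query $\{v_{t-1},v_t\}$. Hence no pair $\{v_t,x\}$ with $x\notin\{v_1,\dots,v_t\}$ has been exposed.

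\textbf{Step 2: count fresh candidates and conclude.} By Step 1, at least $N-(2k-2)=N-2k+2$ candidates $x$ have an unexposed pair $\{v_t,x\}$. Conditional on the filtration generated by all previous queries, these edges are independent Bernoulli$(p)$ variables, since $G(n,p)$ has independent edges and conditioning on disjoint pairs does not affect them. The step fails only if every one of these candidates is non-adjacent to $v_t$. Therefore
\begin{equation*}
\mathbb{P}(F_{\textup{linear}}\mid\mathcal F)\le (1-p)^{N-2k+2},
\end{equation*}
and taking expectations gives the unconditional bound.

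\textbf{Main obstacle.} The only delicate point is the independence claim in Step 2. Failed queries from earlier steps reveal non-edges between earlier tips and many vertices. We must argue that none of these involve the new tip $v_t$. This holds precisely because of the query model: Phase 1 queries always involve the current tip, and each vertex is the tip for only one step.

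I would also remark on the count $2k-2$. At most $2k-2$ path vertices are excluded while extending toward length $2k-1$, and this gives the exponent stated in the lemma. If the pool is defined to exclude the root separately, the bound only improves.
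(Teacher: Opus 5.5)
Your proposal is correct and follows the paper's argument. At most $2k-2$ pool vertices already lie on the path, so at least $N-2k+2$ candidates remain, each adjacent to the current tip independently with probability $p$; this gives $(1-p)^{N-2k+2}$. Your Step~1 adds something the paper only asserts: it justifies that independence by showing that no pair $\{v_t,x\}$ with $x$ off the path is exposed before $v_t$ becomes the tip.
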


\begin{proof}
	At any given step during the Linear Extension, the algorithm attempts to attach a new vertex to the current tip of the path. The target path length is $2k-1$. In the worst case, the algorithm is searching for the final vertex of the path, meaning that $2k-2$ vertices of the candidate pool have already been embedded. Hence at least
	\begin{equation*}
	N-(2k-2)=N-2k+2
	\end{equation*}
	candidate vertices remain available.
	
	Each remaining candidate is adjacent to the current path tip independently with probability $p$. Therefore, the extension step fails only if all remaining candidates fail to connect to the tip, which has probability at most
	\begin{equation*}
	(1-p)^{N-2k+2}.
	\end{equation*}
\end{proof}

		In the proof above we used independence of one-edge queries. For Corner Closure operations, the corresponding issue is more delicate. If a candidate vertex has previously failed a pair-query, then the algorithm has learned that at least one of two tested edges was absent, although it has deliberately not retained which edge failed. Therefore, the conditional success probability of a later pair-query need not be exactly $p^2$. The next lemmas show that, under the nested-L construction, the amount of relevant negative pair-test history remains uniformly bounded.

		Before estimating the conditional success probability of a \textbf{Corner Closure}, we must bound the amount of pair-test history that can involve any active boundary vertex. This prevents the adaptive search process from generating an unbounded collection of negative constraints on candidates that are returned to the reservoir after failed tests.

\begin{lemma}[Bounded Partner Invariant]\label{lem:partner_bound}
	During the iterative construction of the grid via \textbf{Corner Closures}, each vertex $u$ of the partially embedded target grid occurs in at most four distinct closure pairs $(u,w)$ over the entire execution of the algorithm. In particular, while $u$ belongs to the active frontier, the set of distinct vertices $w$ for which $(u,w)$ is queried as a closure pair has cardinality at most $4$.
\end{lemma}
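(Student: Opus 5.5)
The proof will be a deterministic counting argument that uses only the explicit schedule from Lemma~\ref{lem:nested_L}. Place the embedded grid on coordinates $(i,j)\in[k]\times[k]$ as in that proof. In this schedule, each vertex $(i,j)$ with $i,j\ge 2$ is added by a single Corner Closure whose closure pair is $\{(i-1,j),(i,j-1)\}$. The vertices $(1,j)$ and $(i,1)$ on the initial boundary are never added by a closure. So the complete list of closure pairs is the set of \emph{anti-diagonal pairs} $\{(a,b),(a+1,b-1)\}$: the two grid neighbours of a new vertex $(a+1,b)$, namely its upper and left neighbours. A closure pair always consists of two already-embedded vertices that are diagonally adjacent in the grid, and the pair $\{(a,b),(a+1,b-1)\}$ is queried exactly when $(a+1,b)$ is embedded. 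I would also record the remark that follows Lemma~\ref{lem:nested_L}: the map from a new vertex to its closure pair is injective, so no pair is ever repeated.

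Now fix a vertex $u=(a,b)$ and list the pairs it can belong to. By the description above, $u$ can appear in a pair only as one endpoint of an anti-diagonal pair. Its possible partners are therefore the diagonal neighbours $(a+1,b-1)$ and $(a-1,b+1)$, which is at most two distinct $w$. That already gives the claimed bound of $4$. The looser count of four (all diagonal neighbours) also holds if one allows the symmetric schedule in which each closure uses the pair $\{(i-1,j),(i,j+1)\}$ or similar orientations. I would state the argument so that it covers any schedule in which every closure pair is made of two grid neighbours of the new vertex. Any such pair is a pair of diagonal neighbours of $u$, and $u$ has at most four diagonal neighbours in $\mathbb{Z}^2$. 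This gives at most $4$ distinct pairs $(u,w)$, with equality impossible to exceed. For the strict nested-L order the count is at most $2$.

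The remaining care point is the phrase ``over the entire execution''. Failed pair-queries are repeated tests of the \emph{same} closure pair against different candidates, not new pairs. Since each pair is used for exactly one target vertex and is abandoned once that vertex is embedded, the set of partners $w$ does not grow with the number of failures. I would also check that a vertex in a pair must already be embedded. This holds because $(i-1,j)$ and $(i,j-1)$ precede $(i,j)$ in the layer order, as verified in Lemma~\ref{lem:nested_L}. With that check, the restriction ``while $u$ belongs to the active frontier'' follows immediately from the global bound. I expect the only subtle step to be this last one: ensuring the algorithm never re-queries a pair after a closure fails globally. I would handle it by assuming, as the query model states, that closures follow the deterministic schedule and that a failure of the whole reservoir is declared a global failure rather than a re-scheduling.
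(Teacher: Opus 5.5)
Your proof is correct, but it argues differently from the paper. The paper's proof does not depend on the closure order. Each Corner Closure completes a unit square, $u$ lies on at most four unit squares, and each square is completed once and yields at most one closure pair through $u$, so $u$ has at most four partners. You instead read the closure pair of every vertex directly off the schedule of Lemma~\ref{lem:nested_L}: $(i,j)$ with $i,j\ge 2$ is closed by $\{(i-1,j),(i,j-1)\}$. So all closure pairs are anti-diagonal, and $u=(a,b)$ can only be paired with $(a+1,b-1)$ (when $(a+1,b)$ is added) or with $(a-1,b+1)$ (when $(a,b+1)$ is added).

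This gives a strictly sharper statement. Of the four squares at $u$, the two in which $u$ is the new vertex or the corner opposite it never produce a pair through $u$, a distinction the paper's count does not make. The gain matters downstream. In Lemma~\ref{lem:corner_closure} the current pair $\{u,v\}$ is one of the at most two pairs through $u$ and is excluded from $Q_{x_i}$. Hence $d_{Q_{x_i}}(u),d_{Q_{x_i}}(v)\le 1$, so $|W|\le 2$, and the factor $(1-p)^8$ can be replaced by $(1-p)^2$. The paper's formulation, in exchange, needs no coordinate bookkeeping and survives any change of closure order, provided every closure still completes a unit square.

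That proviso is exactly what your general remark is missing. For an arbitrary schedule using two grid neighbours of the new vertex, $w$ need not be a diagonal neighbour of $u$: the two neighbours could lie on opposite sides of the new vertex (e.g.\ $(i-1,j)$ and $(i+1,j)$). That gives collinear partners at distance two and up to eight partners in total. This does not affect your argument for the actual nested-$L$ schedule, where every pair is perpendicular.
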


\begin{proof}
	By definition, a \textbf{Corner Closure} completes one previously unfilled unit square of the target $k\times k$ grid. If $u$ occurs in a closure pair $(u,w)$, then $u$ and $w$ are two already embedded vertices of the corresponding unit square, and the closure operation adds the required new grid vertex.
	
	A fixed vertex $u$ in a square grid is incident with at most four unit squares. Each unit square is completed at most once during the prescribed nested-$L$ construction. Moreover, whenever a square incident with $u$ is completed, it produces at most one closure pair containing $u$. Hence each of the at most four unit squares incident with $u$ contributes at most one distinct partner $w$.
	
	Therefore, under the prescribed nested-$L$ construction, $u$ belongs to at most four distinct closure pairs during the whole construction.
\end{proof}

\begin{lemma}\label{lem:corner_closure}
	Let $G\sim G(n,p)$ be an Erd\H{o}s--Rényi graph, and suppose a Corner Closure step is performed with active closure pair $(u,v)$ and active reservoir of size $N$. Under the pair-query model, for any realizable Corner Closure history $\mathcal F$ before this step, the conditional probability $\mathbb{P}(F_{\textup{corner}}\mid \mathcal F)$ that the algorithm fails during this step is bounded above by
	\begin{equation*}
		\mathbb{P}(F_{\text{corner}}\mid \mathcal F)
		\le
		\left(1-p^2(1-p)^8\right)^N.
	\end{equation*}
\end{lemma}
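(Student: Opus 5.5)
The plan is to lower-bound, for each of the $N$ reservoir candidates $x$, the conditional probability that $xu,xv\in E(G)$ given $\mathcal F$, and then multiply over candidates.

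\textbf{Step 1: factorisation across candidates.} For a fixed candidate $x$, let $E_x$ be the set of potential edges between $x$ and the embedded grid vertices. Because $x$ is not yet embedded, every pair-test recorded in $\mathcal F$ that involves $x$ is an event measurable with respect to $E_x$ alone. Here we use that quarantine removed all Phase~1 one-edge exposures from the active reservoir, and that each failed test retains only the Boolean outcome. Successful tests embed their candidate, so they concern vertices outside the reservoir. Edges between embedded vertices do not affect the reservoir, since every pair-query concerns only edges from a candidate to embedded vertices. Since the sets $E_x$ are disjoint for distinct candidates, $\mathcal F$ restricted to the reservoir is an intersection of independent events, one per candidate. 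The conditional law therefore factorises, and it suffices to show $\mathbb P(xu,xv\in E(G)\mid \mathcal F_x)\ge p^2(1-p)^8$ for each $x$. The algorithm scans candidates in an order measurable with respect to the filtration, and fails only if every candidate fails. The bound $\bigl(1-p^2(1-p)^8\bigr)^N$ then follows by multiplying the per-candidate failure probabilities.

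\textbf{Step 2: per-candidate bound.} Let $A_x=\{xu,xv\in E\}$, and let $B_x$ be the intersection of the recorded failures $\{xa\notin E\}\cup\{xb\notin E\}$ over the pairs $(a,b)$ previously tested against $x$. We have $\mathbb P(A_x\mid B_x)=\mathbb P(A_x\cap B_x)/\mathbb P(B_x)\ge \mathbb P(A_x\cap B_x)$. To bound $\mathbb P(A_x\cap B_x)$ from below, I would exhibit a sufficient event. Let $W$ be the set of vertices other than $u,v$ that appear in some failed pair together with $u$ or $v$. If $xu,xv\in E$ and $xw\notin E$ for all $w\in W$, then every failed pair touching $u$ or $v$ is satisfied. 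By the nested-L schedule, no earlier closure pair equals $(u,v)$, so each such pair contains some $w\in W$. Lemma~\ref{lem:partner_bound} gives that $u$ and $v$ each have at most four partners, so $|W|\le 8$.

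\textbf{Step 3: pairs disjoint from $\{u,v\}$.} Failed pairs avoiding $\{u,v\}$ depend only on edges of $x$ outside $\{xu,xv\}\cup\{xw:w\in W\}$. By Harris/FKG, or simply by independence after conditioning on those coordinates, they are decreasing events in edges independent of the ones fixed above. Combining them with the decreasing event $\{xw\notin E,\ w\in W\}$ and dividing by $\mathbb P(B_x)$ gives
\begin{equation*}
\mathbb P(A_x\mid B_x)\ \ge\ p^2(1-p)^{|W|}\ \ge\ p^2(1-p)^8.
\end{equation*}
More carefully, write $B_x=B'\cap B''$, where $B''$ collects the failures involving $u$ or $v$. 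Conditioning on $B'$ does not change the law of the edges to $\{u,v\}\cup W$ except through edges to $W$. The edges to $W$ enter monotonically, so FKG gives $\mathbb P(A_x\cap B''\mid B')\ge p^2(1-p)^{|W|}$.

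The main obstacle is Step 3, namely rigorously decoupling the failed pairs that share a vertex in $W$ with failed pairs avoiding $u,v$. I would handle this by applying the Harris inequality to the product measure on $E_x$. The event $B'\cap\{xw\notin E\ \forall w\in W\}$ is decreasing, while $A_x$ depends only on the coordinates $xu,xv$, which are independent of $B'$. This step also requires checking that the quarantine truly removes every non-pair exposure.
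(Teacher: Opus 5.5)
Your proposal is correct. Its skeleton matches the paper's proof:
\begin{itemize}
\item the same witness event $A_x\cap D$, with $D=\{X_w=0\ \forall w\in W\}$;
\item the bound $|W|\le 8$ from Lemma~\ref{lem:partner_bound}, together with the fact that $(u,v)$ was never tested before;
\item a product over the $N$ candidates.
\end{itemize}

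Where you genuinely differ is in how you decouple the failed pairs that avoid $\{u,v\}$.
\begin{itemize}
\item The paper uses no correlation inequality. Its witness event keeps only the constraints $\mathcal H_R$ on pairs lying entirely inside $R=V(Q_x)\setminus(\{u,v\}\cup W)$. Every other failed pair has an endpoint in $W$, so it is implied by $D$ (the paper's $B$). Hence $A\cap D\cap\mathcal H_R\subseteq A\cap\mathcal H$ is a product event on disjoint coordinates, and the paper cancels using $\mathcal H\subseteq\mathcal H_R$.
\item You keep all failures avoiding $\{u,v\}$ in $B'$ and apply Harris to the decreasing events $D$ and $B'$. This gives $\mathbb P(A_x\cap D\cap B')\ge p^2(1-p)^{|W|}\mathbb P(B')\ge p^2(1-p)^{|W|}\mathbb P(B_x)$.
\end{itemize}
Your route is shorter, but it needs the whole residual history to be monotone. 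The paper's route needs only independence and would tolerate an arbitrary event on the $R$-coordinates. Your explicit factorisation over the disjoint stars $E_x$ is a cleaner rendering of the paper's deferred-decisions and chain-rule step.

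Two statements in your write-up should be corrected, though neither breaks the final argument.
\begin{itemize}
\item In Step~3, it is false that failed pairs avoiding $\{u,v\}$ depend only on edges outside $\{xu,xv\}\cup\{xw:w\in W\}$. A pair $(w,w')$ with $w\in W$ avoids $\{u,v\}$ but involves $X_w$. This overlap is exactly why Harris, or the paper's restriction to pairs inside $R$, is needed.
\item The crude bound $\mathbb P(A_x\mid B_x)\ge\mathbb P(A_x\cap B_x)$ in Step~2 would on its own lose a factor $\mathbb P(B')$. That factor is not bounded below by a constant once $x$ has failed $\Theta(k^2)$ tests, so you must keep the denominator, as you eventually do.
\item In Step~1, quarantine does not remove Phase~1 exposures from the reservoir during Phase~3. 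Released vertices of $V_q$ carry negative constraints $\{X_a=0\}$ for path vertices $a$. What you actually need is that these events are measurable with respect to $E_x$, are decreasing, and never involve $u$ or $v$ after the first interior layer. They can then be absorbed into $B'$, and your argument goes through unchanged.
\end{itemize}
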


\begin{proof}
Throughout this proof, all conditional probabilities are understood on realizable history branches of positive probability. In particular, all conditioning events appearing below have positive probability.
For a candidate $x_i$ remaining in the active reservoir, every previous \textbf{Corner Closure} test involving $x_i$ has failed, since a successful candidate is immediately embedded and removed from the reservoir. 
Let $A_i$ be the event that $x_i$ is adjacent to both vertices of the current closure pair:
\begin{equation*}
	A_i=\{x_i u \in E(G),\ x_i v \in E(G)\}.
\end{equation*}

For every embedded grid vertex $z$, write
\begin{equation*}
X_z=\mathbf{1}_{\{x_i z\in E(G)\}}.
\end{equation*}
Thus $A_i=\{X_u=X_v=1\}$.

For the fixed revealed history prior to the current test, let $Q_{x_i}$ be
the auxiliary graph whose vertices are the already embedded grid vertices, and edges are the closure pairs against which $x_i$
was previously tested. Since $x_i$ remains in the reservoir, every such
test returned FAILURE. The corresponding event is
\begin{equation*}
\mathcal H_i
=
\bigcap_{ab\in E(Q_{x_i})}
\{X_aX_b=0\}.
\end{equation*}

We continue by defining the exposed neighbourhood to the boundary that is being augmented into a square, with the help of the auxiliary graph $Q_{x_i}$, i.e. a set of vertices that previously were used to connect either to $u$ or $v$.  Let
\begin{equation*}
W=N_{Q_{x_i}}(u)\cup N_{Q_{x_i}}(v).
\end{equation*}
By Lemma~\ref{lem:partner_bound},
\begin{equation*}
|W|
\le d_{Q_{x_i}}(u)+d_{Q_{x_i}}(v)
\le 8.
\end{equation*}
Moreover, the current pair $uv$ is not an edge of $Q_{x_i}$, since
$x_i$ has not previously been tested against the current closure pair. Consequently,
\begin{equation*}
W\cap\{u,v\}=\varnothing.
\end{equation*}

With the help of the set $W$ corresponding to the exposed neighbourhood we can split the history in $\mathcal H_i$, into local and residual edges. Let
\begin{equation*}
R=V(Q_{x_i})\setminus\bigl(\{u,v\}\cup W\bigr),
\end{equation*}
and partition the edges of $Q_{x_i}$ into the corresponding residual and local sets, as
\begin{equation*}
E_R=\{ab\in E(Q_{x_i}):a,b\in R\},
\qquad
E_{\mathrm{loc}}=E(Q_{x_i})\setminus E_R.
\end{equation*}
Define
\begin{equation*}
\mathcal H_R
=
\bigcap_{ab\in E_R}\{X_aX_b=0\},
\qquad
\mathcal H_{\mathrm{loc}}
=
\bigcap_{ab\in E_{\mathrm{loc}}}\{X_aX_b=0\}.
\end{equation*}
Then, by construction,
\begin{equation*}
\mathcal H_i=\mathcal H_R\cap\mathcal H_{\mathrm{loc}},
\end{equation*}
and in particular
\begin{equation*}
\mathcal H_i\subseteq\mathcal H_R.
\end{equation*}

We continue by introducing an event $B$ which is easy to work with in terms of revealed edge history. Let
\begin{equation*}
B=\{X_w=0\text{ for every }w\in W\}.
\end{equation*}

Next we show that every edge in $E_{\mathrm{loc}}$ has at least one endpoint in $W$.
Indeed, an edge in $E_{\mathrm{loc}}$ is not contained entirely in $R$,
so one of its endpoints lies in $\{u,v\}\cup W$. If that endpoint is
already in $W$, there is nothing to prove. If it is $u$ or $v$, then
its other endpoint belongs to $W$ by the definition of $W$, since
$uv\notin E(Q_{x_i})$.

Consequently,
\begin{equation*}
B\subseteq\mathcal H_{\mathrm{loc}},
\end{equation*}
and therefore
\begin{equation*}
B\cap\mathcal H_R
\subseteq
\mathcal H_{\mathrm{loc}}\cap\mathcal H_R
=
\mathcal H_i.
\end{equation*}
Hence
\begin{equation*}
A_i\cap B\cap\mathcal H_R
\subseteq
A_i\cap\mathcal H_i.
\end{equation*}

Using these inclusions we can continue by noting that, the events $A_i$, $B$, and $\mathcal H_R$ depend respectively on the
disjoint collections of edge indicators
\begin{equation*}
\{X_u,X_v\},\qquad
\{X_w:w\in W\},\qquad
\{X_r:r\in R\}.
\end{equation*}
Therefore, by independence of the Erd\H{o}s--Rényi edge indicators,
\begin{equation*}
\mathbb P(A_i\cap B\cap\mathcal H_R)
=
p^2(1-p)^{|W|}\mathbb P(\mathcal H_R).
\end{equation*}

Using the preceding inclusion,
\begin{equation*}
\mathbb P(A_i\cap\mathcal H_i)
\ge
p^2(1-p)^{|W|}\mathbb P(\mathcal H_R).
\end{equation*}
Since $\mathcal H_i\subseteq\mathcal H_R$,
\begin{align*}
	\mathbb P(A_i\mid\mathcal H_i)=
	\frac{\mathbb P(A_i\cap\mathcal H_i)}
	{\mathbb P(\mathcal H_i)} \ge
	p^2(1-p)^{|W|}
	\frac{\mathbb P(\mathcal H_R)}
	{\mathbb P(\mathcal H_i)} \ge
	p^2(1-p)^{|W|}
	\ge
	p^2(1-p)^8.
\end{align*}

Let $\mathcal F_{i-1}$ denote the complete revealed Corner Closure
history immediately before $x_i$ is tested. On any fixed realizable
history branch, the information in $\mathcal F_{i-1}$ involving
$x_i$ is precisely the collection of failure constraints encoded by
$\mathcal H_i$. All remaining revealed information depends on host-edge
variables distinct from the $x_i$-star variables appearing above.
Hence, by edge independence and deferred decisions,
\begin{equation*}
\mathbb P(A_i\mid\mathcal F_{i-1})
\ge p^2(1-p)^8.
\end{equation*}

Set
\begin{equation*}
	q=p^2(1-p)^8.
\end{equation*}
At every sequential candidate test, conditional on the complete revealed
history up to that test, the candidate fails with probability at most
$1-q$. A Corner Closure runs out of suitable vertices only if all $N$ candidates in the active
reservoir fail. Therefore, by repeated conditioning and the conditional
chain rule,
\begin{equation*}
	\mathbb P(F_{\mathrm{corner}}\mid\mathcal F)
	\le (1-q)^N
	=
	\left(1-p^2(1-p)^8\right)^N.
\end{equation*}
			
\end{proof}

		\begin{remark}[Thinning]
	The lower bound in Lemma~\ref{lem:corner_closure} contains the factor
	$(1-p)^8$. This factor arises from the conservative witness event used
	to control previous failed pair-queries, and therefore deteriorates when
	$p$ is close to $1$, even though increasing the density of the host
	graph should not make the embedding problem harder. Phase 0 removes this
	artifact. We fix $p_0\in(0,1)$ with
	\begin{equation*}
	(1-p_0)^8\ge\frac12,
	\end{equation*}
	and, whenever the original density exceeds $p_0$, work instead in an
	independently thinned subgraph with law $G(n,p_0)$. Since this
	operational graph is a subgraph of the original host graph, success after
	thinning implies success in the original graph. Consequently, throughout
	the probabilistic analysis we may assume
	\begin{equation*}
	p\le p_0
	\qquad\text{and hence}\qquad
	(1-p)^8\ge\frac12.
	\end{equation*}
\end{remark}

Lemma~\ref{lem:corner_closure} applies to Corner Closure steps whose candidate histories are generated only by pair-tests relative to the active frontier. The vertices queried during the initial Linear Extension may carry individual one-edge exposure history relative to the initial path $P_{2k-1}$. To prevent this history from entering the first Corner Closure layer, the algorithm temporarily quarantines all unused vertices from the initial pool. After the first interior layer is completed, the active frontier no longer contains vertices of $P_{2k-1}$, and the quarantined vertices can safely be released.

		\subsection{Algorithmic Description}
		
		We continue by explaining the mechanism in our algorithm that bypasses the mixing of vertices queried during the construction of the first path, and the rest of the grid.

The idea is the following. We reserve an initial pool $V_{\text{start}}$ and
use it exclusively to construct the starting path. Once the path is complete,
every unused vertex of $V_{\text{start}}$ is moved to a set $V_q$ of
\emph{quarantined} vertices. We then build the first interior layer using only
vertices from $V_G\setminus (V_q\cup P_{2k-1})$.

Once this buffer layer is complete, the active frontier no longer contains
vertices of the initial path. At that point the quarantined vertices can be
released without reintroducing the one-edge exposure history accumulated during
Phase 1.

		We formalize this temporal restriction of the candidate pool as follows:

		\begin{enumerate}
			\item \textbf{Phase 0: Virtual Global Thinning.}
			Before the search begins, we fix a constant $p_0\in(0,1)$ such that
			$(1-p_0)^8\ge 1/2$. If the input density satisfies $p\le p_0$, the
			algorithm works directly with the original graph. If $p>p_0$, we define an
			operational subgraph $\widetilde G$ by independently retaining each potential
			edge of $G$ with probability $p_0/p$. This thinning is global in
			distribution but lazy in implementation: the thinning variable for an edge is
			revealed only when that edge is queried. The resulting operational graph has law
			$G(n,p_0)$, and all subsequent searches are performed in $\widetilde G$.
			Since $\widetilde G\subseteq G$, any grid found in $\widetilde G$ is also a
			grid in the original host graph. For notational simplicity, after this phase we
			write $G$ and $p$ for the operational graph and its edge density.
			\item \textbf{Phase 1: The Linear Partition.} Before the algorithm begins, we formally partition the vertex set $V(G)$ into two disjoint subsets: an initial pool $V_{\text{start}}$ of size $\lfloor n/3\rfloor$, and a pristine reservoir $V_{\text{main}}$. The algorithm constructs the initial $2k-1$ boundary path by querying candidates \textit{exclusively} from $V_{\text{start}}$.

		\item \textbf{Phase 2: The Buffer Layer.} Once $P_{2k-1}$ is successfully embedded, we define the \textbf{quarantined set} $V_q\subset V_{\text{start}}$ as all remaining unmapped vertices from the initial pool. These vertices may have been queried during Phase~1 and may therefore carry one-edge exposure history relative to	$P_{2k-1}$. They are strictly quarantined and temporarily removed from the search space. The algorithm executes the first layer of Corner Closures (the first $2k-3$ interior vertices) drawing \textit{exclusively} from the pristine reservoir $V_{\text{main}}$. Because no vertex in $V_{\text{main}}$ was queried during the Linear Extension, their probabilistic relationship with $P_{2k-1}$ is entirely independent, allowing Lemma \ref{lem:corner_closure} to apply directly.

			\item \textbf{Phase 3: The Release.} Once the first interior L-shape is complete, the original path $P_{2k-1}$ becomes fixed in the grid. The boundary subject to corner closures now consists entirely of vertices drawn from $V_{\text{main}}$. Because the quarantined vertices in $V_q$ were never queried against this newly formed boundary, their history relative to the active frontier is completely clean. The quarantine is formally lifted, and $V_q$ is merged back into the active reservoir for the remainder of the embedding, and Lemma \ref{lem:corner_closure} can be used.
			
		\end{enumerate}
		In Figure \ref{fig:grid_alg} we show the steps needed to construct a six-by-six grid. The same construction applies to larger square grids.

\begin{figure}[!htbp]
	\centering
	\includegraphics[width=0.6\linewidth]{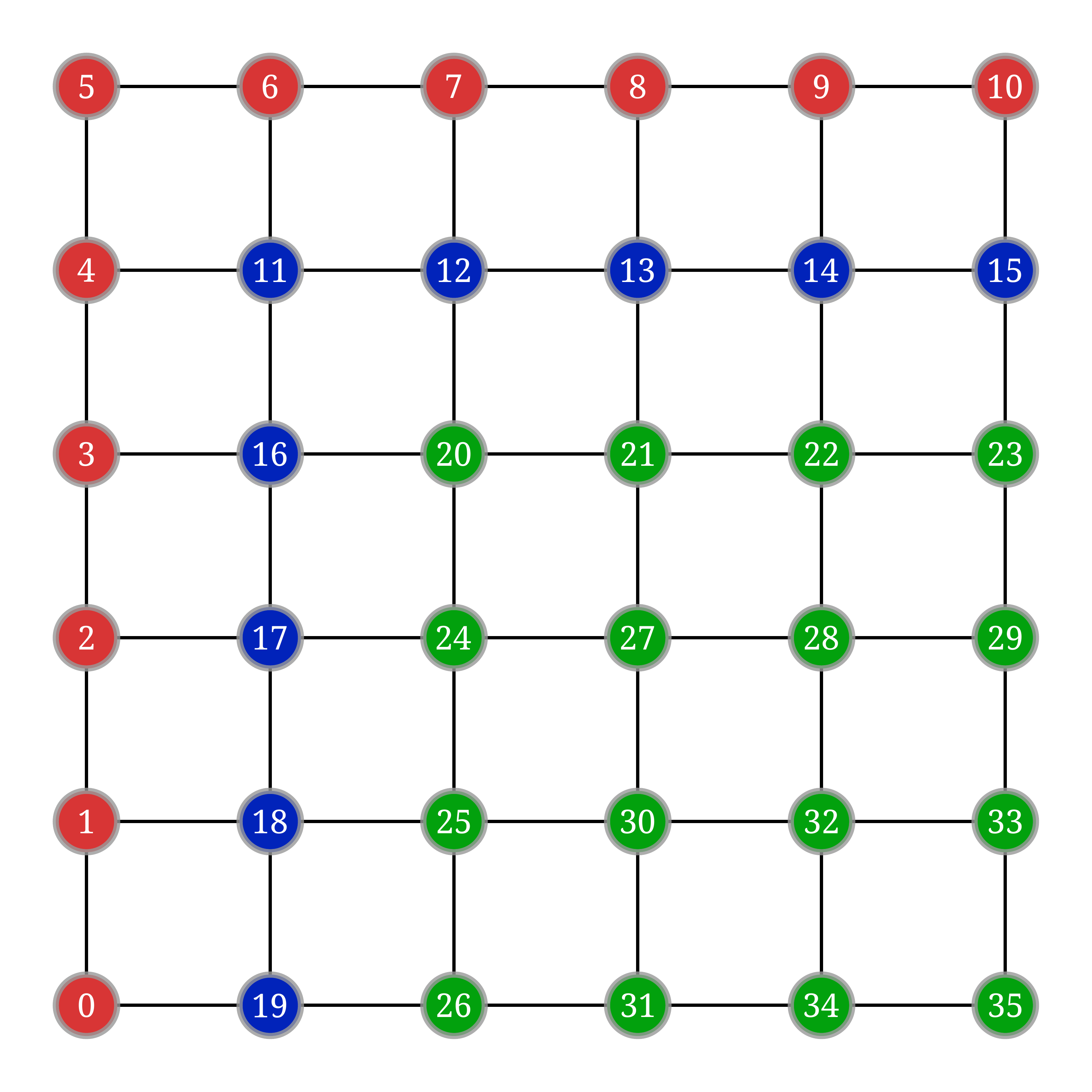}
\caption{Different colors indicate the phases in which vertices are embedded. Red vertices are added during Phase~1, blue vertices during Phase~2, which creates the quarantine buffer, and green vertices during Phase~3. Vertex labels indicate the chronological embedding order. Whenever a new layer is started, its corner is closed first and the	construction then proceeds outward toward the boundary.}
	\label{fig:grid_alg}
\end{figure}
		
		The formal description of our algorithm goes as follows:
		
		\clearpage
		\begin{algorithm}[!htbp]
			\footnotesize
			\caption{Quarantined Local Search ($\mathcal{A}$)}\label{alg:grid_search}
			
\textbf{Require:} Erd\H{o}s--Rényi graph $G=(V_G,E_G)$, target grid dimension $k$, model parameter $p$, and fixed thinning constant $p_0\in(0,1)$.
			
			\textbf{Ensure:} Embedded $k \times k$ square grid $H$, or \textbf{FAILURE}
			
			\vspace{1mm}
			
			\hrule
			
			\vspace{2mm}
			
			\begin{tabbing}
				
				\hspace{1.5em} \= \hspace{1.5em} \= \hspace{1.5em} \= \hspace{1.5em} \= \kill
				\textbf{Phase 0: Virtual Global Thinning} \\
				0. \> Fix $p_0\in(0,1)$ so that $(1-p_0)^8\ge 1/2$, and set $p_{\mathrm{op}}:=\min\{p,p_0\}$. \\
				1. \> \textbf{if} $p\le p_0$ \textbf{then} set $\widetilde G:=G$. \\
				2. \> \textbf{if} $p>p_0$ \textbf{then} assign independently to every potential edge \\
				\> $e\in {V_G\choose 2}$ a thinning variable $\xi_e\sim\mathrm{Bernoulli}(p_0/p)$. \\
				3. \> Define $e\in E(\widetilde G)$ iff $e\in E(G)$ and $\xi_e=1$. \\
				\> The variables $\xi_e$ are revealed only when the corresponding edge is queried. \\
				4. \> All subsequent searches and embeddings are performed in $\widetilde G$. \\
				5. \> For notational simplicity, relabel $\widetilde G$ as $G$ and $p_{\mathrm{op}}$ as $p$. \\
				\textbf{Phase 1: The Linear Partition} \\
				
				6. \> Partition $V_G$ into $V_{\text{start}}$ with $|V_{\text{start}}|=\lfloor n/3\rfloor$, and $V_{\text{main}}=V_G\setminus V_{\text{start}}$ \\
				
				7. \> Initialize path $P \leftarrow$ arbitrary root vertex $v \in V_{\text{start}}$ \\
				
				8. \> \textbf{while} $|P| < 2k-1$ \textbf{do} \\
				
				\> \> \textbf{Operation:} Linear Extension \\
				
				\> \> Search $V_{\text{start}} \setminus P$ for candidate $x$ adjacent to the current path tip \\
				
				\> \> \textbf{if} valid candidate found \textbf{then} permanently append $x$ to $P$ \\

				\> \> \textbf{else return FAILURE} \quad \textit{// Starvation} \\
				
				9. \> Map $P$ to the L-shaped outer boundary of target grid $H$ \\
				
				\\
				
				\textbf{Phase 2: The Buffer Layer} \\
				
				10. \> $V_q \leftarrow V_{\text{start}} \setminus P$ \quad \textit{// Quarantine remaining Phase 1 vertices} \\
				
				11. \> Reservoir $R \leftarrow V_{\text{main}}$ \quad \textit{// Set active reservoir to strictly pristine vertices} \\
				
				12. \> \textbf{while} the first interior layer ($2k-3$ vertices) is incomplete \textbf{do} \\
				
				\> \> \textbf{Operation:} Corner Closure \\
				
				\> \> Select the next closure pair $(u,v)$ according to the deterministic nested-$L$
				order of Lemma~\ref{lem:nested_L}. \\
				\> \> Search the active reservoir $R$ by pair-queries for a candidate $x$ adjacent to both $u$ and $v$. \\
				\> \> \textbf{if} valid candidate found \textbf{then} add $x$ to $H$ and remove from $R$ \\
				
				\> \> \textbf{else return FAILURE} \\
				
				\\
				
				\textbf{Phase 3: The Release} \\
				
				13. \> $R \leftarrow R \cup V_q$ \quad \textit{// Lift quarantine; merge initial vertices back into active reservoir} \\
				
				14. \> \textbf{while} $|V(H)| < k^2$ \textbf{do} \\
				
				\> \> \textbf{Operation:} Corner Closure \\
				
				\> \> Select the next closure pair $(u,v)$ according to the deterministic nested-$L$ order of Lemma~\ref{lem:nested_L}. \\
				\> \> Search the active reservoir $R$ by pair-queries for a candidate $x$ adjacent to both $u$ and $v$. \\
				\> \> \textit{*Note: On failure, the specific missing edge is deliberately ignored to preserve symmetry.} \\
				\> \> \textbf{if} valid candidate found \textbf{then} add $x$ to $H$ and remove from $R$ \\
				
				\> \> \textbf{else return FAILURE} \\
				
				\\
				
				15.\> \textbf{return} $H$
				
			\end{tabbing}
			
			\vspace{1mm}
			
			\hrule
			
		\end{algorithm}
		
The thinning in Phase 0 is global in distribution but lazy in implementation.
If $p>p_0$, then the independent variables $(\xi_e)_{e\in {V_G\choose 2}}$ define an operational graph $\widetilde G\sim G(n,p_0)$. If $p\le p_0$, then $\widetilde G=G$. The algorithm does not need to materialize $\widetilde G$; the thinning variables are revealed only when their corresponding edges are queried. In the probability estimates below, we write $G$ and $p$ for this operational graph and its edge density. In particular, throughout the proof we may assume $p\le p_0$, while success in the operational graph implies success in the original host graph. Since $p_0$ is a fixed positive constant and $C_\delta\sqrt{\ln k/n}=o(1)$, the lower bound $p\ge C_\delta\sqrt{\ln k/n}$ is preserved in the operational graph for all sufficiently large $n$.

We ignore integer-rounding issues throughout; replacing $n/3$ by
$\lfloor n/3\rfloor$ changes only lower-order terms and does not affect any
asymptotic estimate.

	We record the complexity of the construction.
	\begin{proposition}\label{prop:query_complexity}
		The Quarantined Local Search uses $O(k^2n)$ local candidate queries.
		Under the hypothesis $k^2\le n$, its query complexity is therefore
		$O(n^2)$.
	\end{proposition}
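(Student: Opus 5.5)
The plan is to count queries phase by phase, charging each query to the operation (Linear Extension or Corner Closure) during which it is issued. Under the query model, a query is an inspection of one candidate $x$ against either the current path tip or the current closure pair $(u,v)$. Both kinds cost $O(1)$, even counting the lazy revelation of at most two thinning variables $\xi_e$ in Phase~0. Phase~0 itself issues no queries, since the thinning is never materialized and is only revealed on demand.

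First I would bound the cost of one operation. During a single Linear Extension step the algorithm scans $V_{\text{start}}\setminus P$, and each candidate is tested at most once against the current tip. So one step uses at most $|V_{\text{start}}|\le n/3$ queries. During a single Corner Closure, the query model tests each candidate at most once against the current closure pair, and the reservoir $R$ is a subset of $V_G$. So one Corner Closure uses at most $|R|\le n$ queries. This holds whether the step ends in success or in FAILURE, and an early return only lowers the count.

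Next I would count the operations. Phase~1 performs $2k-2$ Linear Extension steps. By Lemma~\ref{lem:nested_L}, Phases~2 and~3 together perform exactly $k^2-(2k-1)$ Corner Closures, one for each vertex added after the boundary path. The total number of queries is therefore at most
\begin{equation*}
(2k-2)\frac{n}{3}+\bigl(k^2-2k+1\bigr)n\le k^2 n = O(k^2 n).
\end{equation*}
Under $k^2\le n$, this is $O(n^2)$.

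The only point needing care is that each candidate is queried at most once per operation. This prevents the count of a single Corner Closure from exceeding the reservoir size. It follows directly from the query model (``each candidate is tested at most once against the current closure pair''), together with the remark after Lemma~\ref{lem:nested_L} that no closure pair is used twice. The analogous statement for the path tip holds because each Linear Extension step is a single scan of $V_{\text{start}}\setminus P$. I do not expect a genuine obstacle beyond making this bookkeeping explicit.
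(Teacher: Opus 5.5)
Your proposal is correct and follows essentially the same route as the paper: count the $2k-2$ Linear Extensions and the $(k-1)^2$ Corner Closures, bound each operation by at most $n$ candidate inspections of constant cost (at most two edge inspections per pair-query), and sum to $O(k^2n)$, which is $O(n^2)$ when $k^2\le n$. Your explicit check that $(2k-2)\frac{n}{3}+(k-1)^2n\le k^2n$ is slightly more careful than the paper's $O(kn)+O(k^2n)$, but the argument is otherwise the same.
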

	
	\begin{proof}
		The construction performs $2k-2$ Linear Extensions and
		\begin{equation*}
			k^2-(2k-1)=(k-1)^2
		\end{equation*}
		Corner Closures. An exhaustive implementation inspects at most $n$
		candidates in each operation. Hence the total number of candidate
		queries is
		\begin{equation*}
			O(kn)+O(k^2n)=O(k^2n).
		\end{equation*}
		A Corner Closure requires at most two ordinary edge inspections per
		candidate, so implementing pair-queries at the edge level changes the
		bound only by a constant factor. Since $k^2\le n$, this is $O(n^2)$.
	\end{proof}

		\section{Proof of Theorem \ref{thm:main}}\label{sec:Proof}

In this section we prove the success of each phase using probabilistic bounds for the operational Erd\H{o}s--Rényi graph. We then prove the algorithm's correctness, and hence Theorem~\ref{thm:main}, by applying a union bound over the three phases of the algorithm.

		\subsection{Probability of Success for Phase 1}\label{subsec:proof:phase1}

We first prove that Phase 1 succeeds with high probability when the initial search is restricted to a linear-sized vertex pool $V_{\text{start}}$.

	\begin{lemma}\label{lem:phase1_success}
		Fix $\delta\in(0,1)$, and let $k=k(n)\to\infty$ satisfy
		\begin{equation*}
			k^2\le(1-\delta)n.
		\end{equation*}
		Suppose
		\begin{equation*}
			p\ge C_\delta\sqrt{\frac{\ln k}{n}},
		\end{equation*}
		where $C_\delta>0$ is fixed. If $F_1$ denotes failure of Phase~1,
		then
		\begin{equation*}
			\mathbb P(F_1)=o(1).
		\end{equation*}
	\end{lemma}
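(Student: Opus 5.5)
Phase~1 consists of exactly $2k-2$ Linear Extension steps, each drawing candidates from $V_{\text{start}}$ with $N=|V_{\text{start}}|=\lfloor n/3\rfloor$. The plan is to bound the failure probability of each step with Lemma~\ref{lem:linear_ext} and then take a union bound over all steps:
\begin{equation*}
\mathbb P(F_1)\le (2k-2)\,(1-p)^{n/3-2k+2}\le 2k\exp\bigl(-p(n/3-2k+2)\bigr).
\end{equation*}
Lemma~\ref{lem:linear_ext} applies step by step because the candidate tested at each step is queried only against the current tip. The edges from remaining pool vertices to the new tip have never been examined before. Earlier tests only revealed edges to previous tips, which are now interior path vertices. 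So, by deferred decisions, each step's fresh edges are independent of the history.

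Next we control the exponent. Since $k^2\le(1-\delta)n$, we have $k\le\sqrt n$, so $2k=o(n)$ and $n/3-2k+2\ge n/4$ for large $n$. Hence
\begin{equation*}
\mathbb P(F_1)\le 2k\exp(-pn/4)\le 2k\exp\Bigl(-\tfrac{C_\delta}{4}\sqrt{n\ln k}\Bigr).
\end{equation*}
Here we use $p\ge C_\delta\sqrt{\ln k/n}$, which still holds in the operational graph after Phase~0, as noted above.

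Finally, compare the two factors. We have $\sqrt{n\ln k}\ge k\sqrt{\ln k}/\sqrt{1-\delta}$, and $k\sqrt{\ln k}$ dominates $\ln(2k)$ because $k\to\infty$. Therefore the whole expression is $\exp\bigl(\ln 2k-\Omega(k\sqrt{\ln k})\bigr)=o(1)$, whatever the value of $C_\delta>0$.

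The only delicate point is justifying the independence used in the per-step bound in the adaptive setting. Lemma~\ref{lem:linear_ext} does not discuss this, so I would spell out the deferred-decision argument once; everything after that is routine calculus.
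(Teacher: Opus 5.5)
Your proposal is correct and follows essentially the same route as the paper: Lemma~\ref{lem:linear_ext} with $N=n/3$, a union bound over the $2k-2$ extensions, and the estimate $n/3-2k+2\ge cn$ (the paper takes $c=1/6$, you take $c=1/4$), giving $2k\exp(-c\,C_\delta\sqrt{n\ln k})=o(1)$ for any fixed $C_\delta>0$. The paper leaves two points implicit that you make explicit: the deferred-decisions justification (edges from remaining pool vertices to the new tip are unexposed) and the comparison $\sqrt{n}\ge k/\sqrt{1-\delta}$ showing that the exponent beats $\ln(2k)$.
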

		
		\begin{proof}
			
			To prove the initial path is successfully embedded, we must show that the algorithm does not fail during any of the required extension steps. By Lemma \ref{lem:linear_ext}, the probability that the algorithm fails during a single linear extension step is bounded by $(1-p)^{N - 2k + 2}$, where $N$ is the total size of the available candidate pool.

			Because the search is confined to the initial partition, $N = n/3$. We apply the union bound over all $2k-2$ required extension steps to establish the global failure probability for Phase 1:
			
			\begin{align*}
				\mathbb{P}(\text{Phase 1 Fails}) \le (2k-2) \cdot (1-p)^{n/3 - 2k + 2} \le 2k \cdot \exp\left(-p\left(\frac{n}{3} - 2k + 2\right)\right).
			\end{align*}

			Since $k^2\le (1-\delta)n$, we have $2k=o(n)$, and hence, for all sufficiently large $n$,

			\begin{equation*}
				\frac{n}{3} - 2k + 2 > \frac{n}{3} - \frac{n}{6} = \frac{n}{6}.
			\end{equation*}

			We apply this to bound the negative exponent:
			
			\begin{equation*}
				-p \left(\frac{n}{3} - 2k + 2\right) < -p \left(\frac{n}{6}\right) \le -C_\delta\sqrt{\frac{\ln k}{n}} \cdot \frac{n}{6} = -\frac{C_\delta}{6}\sqrt{n \ln k}	
			\end{equation*}

			Substituting this explicit algebraic bound back into the global inequality yields:
			
			\begin{equation*}
				\mathbb{P}(\text{Phase 1 Fails}) < 2k \cdot \exp\left(-\frac{C_\delta}{6}\sqrt{n \ln k}\right)
				\end{equation*}

			Because the $\sqrt{n}$ term inside the exponential function grows faster than the $\ln(2k)$ required to cancel the polynomial multiplier, the negative exponential dominates. As $n \to \infty$, this upper bound vanishes to $0$, proving the algorithmic step for Phase 1 succeeds with high probability.
			
		\end{proof}
		
		\subsection{Probability of Success in Phase 2}
		
We continue by showing that, even after excluding the quarantined vertices, the remaining $2n/3$ vertices are sufficient to construct a buffer layer between the initial path and the rest of the grid.

	\begin{lemma}\label{lem:phase2_success}
		Fix $\delta\in(0,1)$, and let $k=k(n)\to\infty$ satisfy
		\begin{equation*}
			k^2\le(1-\delta)n.
		\end{equation*}
		Suppose
		\begin{equation*}
			p\ge C_\delta\sqrt{\frac{\ln k}{n}},
		\end{equation*}
		where $C_\delta>2$. If $F_2$ denotes failure of Phase~2 and
		$S_1$ denotes success of Phase~1, then
		\begin{equation*}
			\mathbb P(F_2\mid S_1)=o(1).
		\end{equation*}
	\end{lemma}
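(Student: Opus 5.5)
We bound Phase~2 failure by a union bound over its $2k-3$ Corner Closures, using Lemma~\ref{lem:corner_closure} for each step. Phase~2 draws candidates only from $V_{\text{main}}$, which has size $n-\lfloor n/3\rfloor\ge 2n/3$. No vertex of $V_{\text{main}}$ was queried in Phase~1. Hence, conditional on $S_1$ and on the realized path $P_{2k-1}$, the edges between $V_{\text{main}}$ and $P_{2k-1}$ are still unexposed independent Bernoulli$(p)$ variables. Phase~1 exposed only edges with both endpoints in $V_{\text{start}}$, and these are disjoint from every edge tested in Phase~2. So the candidate history entering Phase~2 consists only of Phase~2 pair-tests, which is exactly the setting of Lemma~\ref{lem:corner_closure}.

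At the $j$-th closure of the first interior layer ($1\le j\le 2k-3$), at most $j-1\le 2k$ vertices of $V_{\text{main}}$ have been embedded. The active reservoir therefore satisfies $N\ge 2n/3-2k\ge n/2$ for large $n$, because $k=O(\sqrt n)$. Applying Lemma~\ref{lem:corner_closure} with the Thinning Remark, which lets us assume $(1-p)^8\ge 1/2$, gives for every realizable history $\mathcal F$
\begin{equation*}
\mathbb P(F_{\mathrm{corner}}\mid\mathcal F)\le\Bigl(1-\tfrac{p^2}{2}\Bigr)^{n/2}\le\exp\Bigl(-\tfrac{p^2n}{4}\Bigr).
\end{equation*}
Then we take a union bound, first conditioning step by step and then averaging over histories within $S_1$. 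The hypothesis $p^2n\ge C_\delta^2\ln k$ yields
\begin{equation*}
\mathbb P(F_2\mid S_1)\le (2k-3)\exp\Bigl(-\tfrac{C_\delta^2}{4}\ln k\Bigr)\le 2k^{1-C_\delta^2/4}.
\end{equation*}
With $C_\delta>2$ we have $C_\delta^2/4>1$, so the bound tends to $0$ as $k\to\infty$. This is exactly why the threshold $C_\delta>2$ appears in the statement.

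The main obstacle is justifying that conditioning on $S_1$, together with the whole Phase~1 exposure, does not spoil the hypotheses of Lemma~\ref{lem:corner_closure}. I would argue this by deferred decisions. The event $S_1$ and all Phase~1 information are measurable with respect to edges inside $V_{\text{start}}$. The random choices of Phase~0 thinning are independent per edge, so they do not change this. Each Phase~2 pair-test involves edges $xu$ with $x\in V_{\text{main}}$, and this set of edges is disjoint from the Phase~1 edges. The per-step bound therefore holds conditionally on any Phase~1 branch in $S_1$. The chain rule across the $2k-3$ steps then gives the union bound above.
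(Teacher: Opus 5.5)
Your proposal is correct and follows essentially the same route as the paper. You take a union bound over the $2k-3$ closures of the first interior layer, bound each one by Lemma~\ref{lem:corner_closure} with reservoir size $N\ge 2n/3-2k\ge n/2$ and $(1-p)^8\ge 1/2$ from Phase~0, and obtain $2k^{1-C_\delta^2/4}\to 0$ for $C_\delta>2$. Your deferred-decisions argument (Phase~1 information lives only on edges inside $V_{\text{start}}$, so it is disjoint from every candidate's edges queried in Phase~2) is the same observation the paper makes, just spelled out more carefully.
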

		
		\begin{proof}
			
    Condition throughout on the success event $S_1$. Since Phase~1 queries only edges with both endpoints in $V_{\text{start}}$, no edge involving a vertex of $V_{\text{main}}$ has been revealed. Thus the candidate reservoir used in Phase~2 is pristine at the beginning of the phase, and as Phase~2 progresses its candidates accumulate only the pair-query history covered by Lemma~\ref{lem:corner_closure}.

It therefore remains to show that, conditional on $S_1$, none of the
$2k-3$ required Corner Closures fails.
			
By Lemma~\ref{lem:corner_closure}, conditional on the revealed history,
the probability that any single Corner Closure fails is at most
\begin{equation*}
	\left(1-p^2(1-p)^8\right)^N,
\end{equation*}
where $N$ is the number of candidates remaining in the active reservoir.
The initial size of $V_{\text{main}}$ is $2n/3$, and during Phase~2 the
algorithm permanently embeds at most $2k-3$ vertices from this reservoir.
Hence throughout the phase,
\begin{equation*}
	N\ge \frac{2n}{3}-2k.
\end{equation*}

Applying the union bound over the $2k-3$ required Corner Closures gives
\begin{align*}
	\mathbb P(F_2\mid S_1)
	\le
	(2k-3)
	\left(1-p^2(1-p)^8\right)^{\frac{2n}{3}-2k} \le
	2k\exp\left(
	-p^2(1-p)^8
	\left(\frac{2n}{3}-2k\right)
	\right).
\end{align*}

 We substitute our operational density bound $p \ge C_\delta \sqrt{\frac{\ln k}{n}}$ into the exponent. For sufficiently large $n$, the available candidate pool $\frac{2n}{3} - 2k \ge \frac{n}{2}$. Substituting these bounds into the exponential penalty yields:
\begin{equation*}
	-p^2(1-p)^8 \left(\frac{2n}{3} - 2k\right) \le -C_\delta^2 \frac{\ln k}{n} (1-p)^8 \left(\frac{n}{2}\right) = -\frac{C_\delta^2}{2} \ln(k) (1-p)^8
\end{equation*}

Substituting this back into the global failure bound produces:
\begin{equation*}
	\mathbb P(F_2\mid S_1) \le 2k \cdot \exp\left(-\frac{C_\delta^2}{2} \ln(k) (1-p)^8\right) = 2 \cdot k^{1 - \frac{C_\delta^2}{2} (1-p)^8}
\end{equation*}

By Phase~0, the operational density satisfies $p\le p_0$, and hence
\begin{equation*}
	(1-p)^8\ge (1-p_0)^8\ge\frac12.
\end{equation*}
Therefore
\begin{equation*}
	\mathbb P(F_2\mid S_1)
	\le 2k^{1-C_\delta^2/4},
\end{equation*}
which tends to zero whenever $C_\delta>2$.
		\end{proof}

\subsection{Probability of Success in Phase 3}
Now that the first two phases have been proven we can move on to showing that Phase 3 will succeed with high probability as well.

\begin{lemma}\label{lem:phase3_success}
	Fix $\delta\in(0,1)$, and let $k=k(n)\to\infty$ satisfy
	\begin{equation*}
		k^2\le(1-\delta)n.
	\end{equation*}
	Suppose
	\begin{equation*}
		p\ge C_\delta\sqrt{\frac{\ln k}{n}},
	\end{equation*}
	where
	\begin{equation*}
		C_\delta>\frac{2}{\sqrt{\delta}}.
	\end{equation*}
	If $F_3$ denotes failure of Phase~3 and $S_1,S_2$ denote success
	of Phases~1 and~2, respectively, then
	\begin{equation*}
		\mathbb P(F_3\mid S_1\cap S_2)=o(1).
	\end{equation*}
\end{lemma}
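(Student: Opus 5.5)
The plan mirrors the proof of Lemma~\ref{lem:phase2_success}, with one change: the reservoir size is controlled by the global budget $k^2\le(1-\delta)n$ rather than by a fixed fraction of $n$. Condition on $S_1\cap S_2$. We first check that Lemma~\ref{lem:corner_closure} applies to every Corner Closure in Phase~3. The closure pairs used in Phase~3 consist of vertices in layers $L_2,L_3,\dots$. By the quarantine mechanism, none of these vertices lies in $P_{2k-1}$; the first Phase~3 pairs involve only vertices of $L_2$, which were drawn from $V_{\text{main}}$. Vertices of $V_q$ had only one-edge exposures to edges between $V_{\text{start}}$ and vertices of $P_{2k-1}$. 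Hence their star variables towards the active frontier are untouched, and after the release their only relevant history is Phase~3 pair-test failures. Vertices of $V_{\text{main}}$ carry only pair-test history from Phase~2, and that history is covered by the auxiliary graph $Q_{x_i}$ in Lemma~\ref{lem:corner_closure}. The Bounded Partner Invariant (Lemma~\ref{lem:partner_bound}) holds across the whole execution. Consequently, for every step and every realizable history $\mathcal F$, the conditional failure probability is at most $(1-p^2(1-p)^8)^N$.

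Next we bound $N$. After the release, the active reservoir is $V_G\setminus V(H)$. Phase~3 embeds a vertex only when it is added to $H$, so throughout Phase~3 we have $|V(H)|\le k^2-1$. Therefore
\begin{equation*}
N\ge n-k^2\ge \delta n.
\end{equation*}
This is the key point: the quarantined vertices are returned, so the full complement of the partial grid is available. Note that Phase~1 and Phase~2 failures did not remove any vertices other than embedded ones.

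Then we apply a union bound over the $(k-1)^2-(2k-3)\le k^2$ Corner Closures of Phase~3, using $(1-p)^8\ge 1/2$ from Phase~0:
\begin{equation*}
\mathbb P(F_3\mid S_1\cap S_2)\le k^2\exp\!\left(-\tfrac12 p^2\delta n\right)\le k^2\exp\!\left(-\tfrac{C_\delta^2\delta}{2}\ln k\right)=k^{2-C_\delta^2\delta/2}.
\end{equation*}
This tends to zero as $k\to\infty$ if $C_\delta^2\delta>4$, i.e.\ $C_\delta>2/\sqrt\delta$, which is exactly the hypothesis.

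The main obstacle is the first step: justifying that released quarantined vertices, and the vertices of $V_{\text{main}}$ with Phase~2 history, fit the hypotheses of Lemma~\ref{lem:corner_closure}. The Phase~1 one-edge information must concern only $x$--$P_{2k-1}$ edges. These are never tested again once the frontier has moved past $L_1$, so they are independent of the current $X_u,X_v,X_w$ and can be absorbed into the ``remaining revealed information'' in the deferred-decisions step. The counting and exponent calculation are routine.
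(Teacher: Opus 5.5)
Your proposal is correct and follows essentially the same route as the paper. Both condition on $S_1\cap S_2$, note that the Phase~1 exposures of quarantined vertices concern only edges to $P_{2k-1}$ (which no Phase~3 closure pair contains), bound the reservoir by $N\ge n-k^2\ge\delta n$, and union-bound over the $(k-2)^2\le k^2$ closures to get $k^{2-C_\delta^2\delta/2}\to0$. One small precision: those Phase~1 constraints are $x$-star events $X_z=0$ with $z\in P_{2k-1}$, so they are not part of the ``remaining revealed information'' on other variables. They are more accurately merged into the residual event $\mathcal H_R$ of Lemma~\ref{lem:corner_closure}, and this works because $\{u,v\}\cup W$ avoids $P_{2k-1}$ for such $x$.
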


\begin{proof}
Condition throughout on $S_1\cap S_2$. We must show that none of the final $(k-2)^2$ Corner Closures fails. At this stage the quarantine on $V_q$ has been lifted, so the active reservoir $R$ contains all remaining unmapped vertices of the graph.
	
	At the absolute final step of the algorithm, the grid contains $k^2 - 1$ vertices. Because the target grid is bounded by $k^2 \le (1-\delta)n$, the number of available candidates is bounded from below by $N \ge n - k^2 \ge \delta n$.
	
	More precisely, for every $x\in V_q$, each edge individually queried
	during Phase 1 has its other endpoint in the initial path
	$P_{2k-1}$. Once the first interior $L$-layer has been completed, no
	subsequent closure pair contains a vertex of $P_{2k-1}$. Hence the
	$x$-incident edge variables exposed during Phase 1 are disjoint from all
	$x$-incident edge variables that can occur in subsequent Corner Closure
	queries. Conditioning on the Phase-1 exposure history therefore does not
	alter the deferred-decisions argument of Lemma~\ref{lem:corner_closure}
	after the quarantine is lifted.

	By Lemma \ref{lem:corner_closure}, the sequential failure probability for any single step is bounded by $(1 - p^2(1-p)^8)^N$. Applying the union bound over all remaining operations, and absorbing the $(k-2)^2$ multiplier into the macroscopic upper bound $k^2$, we obtain the global failure probability:
	\begin{align*}
		\mathbb P(F_3\mid S_1\cap S_2) \le k^2 \left( 1 - p^2(1-p)^8 \right)^{\delta n}\le k^2 \cdot \exp\left( -p^2(1-p)^8(\delta n) \right)
	\end{align*}

We substitute our operational density bound $p \ge C_\delta \sqrt{\frac{\ln k}{n}}$ into the exponent:
\begin{equation*}
	-p^2(1-p)^8 (\delta n) \le -C_\delta^2 \frac{\ln k}{n} (1-p)^8 (\delta n) = -C_\delta^2 \delta \ln(k) (1-p)^8
\end{equation*}

Substituting this back into the global failure bound gives
\begin{equation*}
	\mathbb P(F_3\mid S_1\cap S_2)
	\le
	k^2 \exp\left(-C_\delta^2\delta\ln(k)(1-p)^8\right)
	=
	k^{2-C_\delta^2\delta(1-p)^8}.
\end{equation*}

By Phase~0, the operational density satisfies $p\le p_0$, and hence
\begin{equation*}
	(1-p)^8\ge (1-p_0)^8\ge \frac12.
\end{equation*}
Therefore,
\begin{equation*}
	\mathbb P(F_3\mid S_1\cap S_2)
	\le
	k^{2-C_\delta^2\delta/2}.
\end{equation*}
This upper bound tends to $0$ whenever
\begin{equation*}
	C_\delta>\frac{2}{\sqrt{\delta}}.
\end{equation*}
Thus Phase~3 succeeds with high probability.
\end{proof}

\subsection{Proof of Theorem \ref{thm:main}}

\begin{proof}
	Let $F_j$ and $S_j$ denote failure and success, respectively, of
	Phase $j$. The algorithm fails only if Phase 1 fails, Phase 2 fails after
	Phase 1 has succeeded, or Phase 3 fails after the first two phases have
	succeeded. Hence
	\begin{align*}
		\mathbb P(\text{Global Failure})=
		\mathbb P(F_1)
		&+\mathbb P(S_1\cap F_2)
		+\mathbb P(S_1\cap S_2\cap F_3)\le
		\mathbb P(F_1)\\
		&+\mathbb P(F_2\mid S_1)
		+\mathbb P(F_3\mid S_1\cap S_2).
	\end{align*}
	
	By the preceding lemmas, the first term tends to $0$, the second tends
	to $0$ whenever $C_\delta>2$, and the third tends to $0$ whenever
	$C_\delta>2/\sqrt{\delta}$. Therefore it is enough to choose
	\begin{equation*}
	C_\delta>
	\max\left\{2,\frac{2}{\sqrt{\delta}}\right\}.
	\end{equation*}
	For this choice the global failure probability tends to~$0$, and the
	algorithm embeds the $k\times k$ square grid with high probability.
\end{proof}

\section{Discussion and Open Problems}\label{sec:Discussion}
In this paper, we introduced a local algorithm for embedding large square grids in Erd\H{o}s--Rényi random graphs. The proof relies on two geometric features of the square grid. First, after an initial boundary path has been embedded, the remaining vertices can be exposed through a deterministic sequence of two-neighbour Corner Closures. Second, this closure schedule gives a uniform bound on the number of previous pair-tests involving any active boundary vertex. The quarantine step separates the one-edge information revealed while constructing the initial path from the later pair-query process. These features are precisely the points at which the proof uses the geometry of the square grid, and they explain why the argument does not immediately extend to other lattice families.

Previous constructive work of \citet{fernandez_manoussakis} already achieved the same $\sqrt{\log n/n}$ density order for square grids occupying a fixed linear fraction of the host graph. The present result extends the constructive guarantee to every fixed almost-spanning regime $k^2\le(1-\delta)n$. Meanwhile, the general embedding results of \citet{ferber_2017} apply to a substantially broader graph class but require higher densities when specialized to square grids. By exploiting the nested geometry of the square grid, the quarantine step separates the one-edge exposure created in Phase~1, while Lemma~\ref{lem:corner_closure} controls the accumulated pair-query history of candidates that remain in the reservoir. This permits a local polynomial-time construction in the almost-spanning regime.
 
The result can also be viewed as a constructive counterpart to the existential square-grid embedding results of \citet{riordan_2000}. Those results establish the presence of spanning structures at substantially sparser densities, whereas our argument produces an explicit local search procedure. In the regime $k^2=\Theta(n)$, our sufficient density is of order \begin{equation*}
\sqrt{\frac{\log n}{n}},
\end{equation*} 
which is a factor of order $\sqrt{\log n}$ above the corresponding $n^{-1/2}$ emergence scale. Thus the present result does not recover the existential threshold. Instead, it gives an explicit polynomial-time local construction at a density larger by a factor of order $\sqrt{\log n}$ in the regime $k^2=\Theta(n)$.

A natural quantitative question is whether this $\sqrt{\log n}$ overhead is intrinsic to the local-search framework. In the regime $k^2=\Theta(n)$, Theorem~\ref{thm:main} succeeds at density of order
\begin{equation*}
	\sqrt{\frac{\log n}{n}},
\end{equation*}
whereas the corresponding existential scale is $n^{-1/2}$. It is not clear whether the additional factor of order $\sqrt{\log n}$ reflects a genuine limitation of the present local-search framework, or only the failure bounds used in the present construction.

A second limitation comes from the reservoir requirement in Phase~3. The proof uses
\begin{equation*}
	n-k^2\ge \delta n,
\end{equation*}
so that a linear number of unused vertices remains available throughout the final Corner Closures. When $n-k^2=o(n)$, and in particular in the fully spanning case, this argument no longer gives the same starvation bound. Extending the method to this regime would therefore require either a more economical use of the reservoir or a different treatment of the final closure steps.

While our method gives a local construction for square grids, other
lattices pose different challenges. Our methodology is not directly applicable to hexagonal lattices.

Hexagonal walls illustrate a different obstruction. The natural layer-by-layer exposure of a wall does not admit the same structure in which, after a short initial boundary, essentially every remaining vertex is introduced through a two-neighbour Corner Closure. A substantial number of vertices must instead be introduced with only one previously embedded neighbour. Such steps behave like Linear Extensions rather than Corner Closures and therefore do not benefit from the $p^2$ success mechanism used throughout the interior of the square-grid construction.

One possible indirect approach would be to embed a suitably chosen rectangular grid that contains the desired wall as a subgraph. Such a construction would inherit the denser square-grid requirement, however, and would therefore not exploit the natural sparsity of the wall. A direct almost-spanning wall algorithm would require a different treatment of the many one-neighbour extension steps.

Triangular grids appear to be a natural direction for extending the method. The deferred-decisions argument in Lemma~\ref{lem:corner_closure} is not specific to two-neighbour closures, and a fixed three-neighbour closure should admit an analogous bound of the form $p^3(1-p)^C$. The main additional issue is that natural triangular-grid constructions may mix two- and three-neighbour closures: a failed query against a pair $(u,v)$ makes that candidate unusable for a later triple containing the same pair. Thus an extension would require additional control of the closure order or of the candidate reservoir. We leave this question open.

These examples show that the applicability of the Quarantined Local Search depends strongly on the geometry of the target graph. In the square-grid case, the initial boundary is constructed through Linear Extensions, and the remainder of the grid is completed through a deterministic sequence of Corner Closures for which the pair-test history of the active frontier remains uniformly bounded. The quarantine mechanism separates these two parts of the construction. A natural direction for future work is to determine which other graph families admit a similar construction.

\section*{Data Availability Statement}
Data sharing is not applicable to this article as no datasets were
generated or analyzed during the current study.
\section*{Conflict of Interest}
The authors declare no conflicts of interest.

\section*{Acknowledgements}
This work was supported in part by the Dutch Research Council (NWO) Talent Programme ENW-Vidi 2021 under grant number VI.Vidi.213.163 (DF).

       \bibliographystyle{plainnat}
		
		\bibliography{references}

	\end{document}